\begin{document}
\title{The $p$-adic Integers as Final Coalgebra }
\titlerunning{The $p$-adic Integers as Final Coalgebra}
\author{Prasit Bhattacharya} 
\institute{Department of Mathematics, Indiana University, \\
Bloomington, IN 47405 USA}
\authorrunning{Prasit Bhattacharya}

\maketitle
  
\begin{abstract} 
 We express the classical $p$-adic integers $\pAdic$, as a metric space, as the final coalgebra to a certain endofunctor. We realize the addition and the multiplication  on $\pAdic$ as  the coalgebra maps from $\pAdic \times \pAdic$.
\end{abstract} 
\section{Introduction}
 The set of $p$-adic integers $\pAdic$, for a prime $p$, has been  of mathematicians' interest for centuries. It is also known that the $p$-adic integers can be visualized as fractals (see \cite{C}). In recent past Leinster (\cite{L}) showed that many fractal like objects, often called self similar spaces can be obtained as final coalgebra for certain functors. Recall that,
\begin{defn} Given an endofunctor $F: \mathcal{C} \to \mathcal{C}$, an $F$-algebra is a pair $(X,f)$ such that $X$ is an object of $\mathcal{C}$ and $f:F(X) \to X$. Similarly $F$-coalgebra is a pair $(Y,g)$ such that $Y$ is an object of $\mathcal{C}$ and $f: Y \to F(Y)$. The initial object in the category of $F$-algebra is known as \emph{initial $F$-algebra} and final object in the category of $F$-coalgebra is the \emph{final $F$-coalgebra}.  
\end{defn} 

Initial algebra results are very important due to the connection of initiality and recursion.  In recent years, final coalgebra results are also important.  The logic-related importance comes from connections to circularity, streams, non-wellfounded sets.  It also has a lot of computer science connections: processes that run forever, bisimulation, and related concepts. Often it is found that one can take an area of classical mathematics and put it under the same roof as all the other final coalgebra results. Many historically important mathematical objects have been realized as the initial algebra or the final coalgebra for certain functors. There is a plethora of such examples: natural numbers, infinite-binary trees, the unit interval, Serpenski Gasket, streams and many more. A wide variety of such examples are discussed in \cite{AMM,R}. The purpose of the paper is to establish that the $p$-adic integers $\pAdic$ is the final coalgebra as a metric space. We do not claim to find any new property or result about the $p$-adic integers. Rather the method in this paper provides a new universal characterization of the $p$-adic integers.

Let us pause for a moment to recall the basics of $p$-adic integers. As a ring $\pAdic$ is obtained as the inverse limit of the diagram 
\[ \ldots \to \mathbb{Z}/p^n \to \ldots \to \mathbb{Z}/p^2 \to \mathbb{Z}/p, \]
where the map $q_n:\mathbb{Z}/p^{n+1} \to \mathbb{Z}/p^{n}$ is the canonical quotient map obtained by reduction modulo $p^n$. Each element  $a \in \pAdic$ can be expressed as an infinite stream
\[ a = (\ldots, a_2, a_1,a_0)\]
where each $a_i \in \lbrace0, 1, \ldots, p-1\rbrace $. Such a stream should be thought of as the power series $\sum_{i = 0}^{\infty} a_ip^i$ or more specifically the inverse limit of the elements 
\[ \tau_{n}(a)=\sum_{i = 0}^{n} a_ip^i \in \mathbb{Z}/p^{n+1}.\]
This makes sense as $q_n(\tau_n(a)) = \tau_{n-1}(a)$. Moreover, $\pAdic$ is a metric space with the $p$-adic distance 
\[ d_p(a,b) = p^{-i}\]
if $i$ is the least number such that $p^{i} \mid a-b$. Representing the $p$-adic integers as streams we see that $d(a,b)= p^i$ if $a_k =b_k$ for $k\leq i$ but $a_{k+1} \neq b_{k+1}$. This metric on $\pAdic$ is in fact an ultrametric, i.e. the metric satisfies a stronger triangle inequality 
\[d_p(a,c) \leq max \lbrace d_p(a,b), d_p(b,c)\rbrace.\]
Another fact that is worth mentioning for the purpose of the paper is that the natural numbers $\mathbb{N}$ with the $p$-adic metric is embedded inside $\pAdic$ and its image is a dense subset. To see this notice that the image of $n \in \mathbb{N}$ is $(\ldots,0,0, a_k, \ldots, a_0)$ if $n = \sum_{i=0}^k a_ip^i$ and hence  $\mathbb{N}$ sits inside $\pAdic$ as the set of streams of finite length. The $p$-adic integers form a ring, i.e. they have addition and multiplication. If we restrict the addition and multiplication to $\mathbb{N}$ we get the usual addition and multiplication of $\mathbb{N}$. Though $p$-adic integers look like a power series, its addition and multiplication are very different from the addition and multiplication of power series as there is a notion of `carry over'. For details about the properties of the $\pAdic$ reader may refer to \cite{Serre}.  
\begin{notn}Let $\Ult$ be the category of $1$-bounded ultrametric spaces, i.e. ultra-metric spaces with diameter at most $1$, and morphisms are  nonexpanding maps, i.e. a morphism $f:P \to Q$ satisfies \[d_{Q}(f(x), f(y)) \leq d_{P}(x,y).\] We will denote $\Ult_*$ for the category of pointed $1$-bounded ultrametric spaces. $\cUlt$ and $\cUlt_*$ will denote the category of $1$-bounded Cauchy complete ultrametric spaces and the category of pointed $1$-bounded Cauchy complete ultrametric spaces respectively. \end{notn}
Now we describe the results that are obtained in this paper. Consider the functor 
\[\mathcal{F}_p: \Ult \to \Ult\]
 which maps $X \mapsto \frac{1}{p}X \times V_p$, where $V_p = \lbrace 0, \ldots,p-1 \rbrace$ with discrete metric and $\frac{1}{p}X$ represents the metric space obtained by contracting $X$ by a factor of $p$. The main result in Section~\ref{sec:initialfinal} is the following.
\begin{main} \label{main:final} The underlying metric space of $\pAdic$ denoted by $Z_p$, is the final $\mathcal{F}_p$-coalgebra in $\Ult$ with the coalgebra map $$\phi: Z_p \to  \frac{1}{p}Z_p \times V_p $$
where $\phi(a)= ((\ldots, a_2, a_1), a_0)$.
\end{main}
In Section~\ref{sec:addmult}, we give $Z_p \times Z_p$ two different coalgebra structures by explicitly defining the two maps $$\tilde{A},\tilde{M}: Z_p \times Z_p \to  \frac{1}{p}(Z_p \times Z_p)\times V_p $$ in such a way that we have the following two results.
\begin{main} \label{main:add}The final $\mathcal{F}_p$-coalgebra map $\alpha: (Z_p \times Z_p, \tilde{A}) \to (Z_p, \phi)$ is the $p$-adic addition. 
\end{main}
\begin{main} \label{main:mult}The final $\mathcal{F}_p$-coalgebra map $\mu: (Z_p \times Z_p, \tilde{M}) \to (Z_p, \phi)$ is the $p$-adic multiplication. 
\end{main}
\subsection*{Acknowledgement} I would like to thank Larry Moss introducing me to this subject and assisting me at various stages of this project, Robert Rose for various discussions about this project and finally  Michael Mandell for being very supportive as an adviser and allowing me the freedom to explore different areas of Mathematics.

\section{ The Final $\mathcal{F}_p$-coalgebra} \label{sec:initialfinal}

In this section we show that the underlying metric space of $\pAdic$, which we denote by $Z_p$, is the final $\mathcal{F}_p$-coalgebra. The following results provide techniques to obtain initial algebras and final coalgebras. 
\begin{thm}{\emph{(J.Ad\'amek)}} \label{thm:ada} Let $ \mathcal{C}$ be a category with the initial object $\perp$ and $F$ be an endofunctor on $\mathcal{C}$. Suppose  that the colimit of the initial $\omega$-chain, i.e.
\[ \perp \overset{\iota}{\to} F(\perp) \overset{F(\iota)}{\to} F^2(\perp) \overset{F^2(\iota)}{\to} \ldots\]
exists and $F$ preserves the colimit, then the initial $F$-algebra is the object
\[ \mu F = \colim_{\omega}F^n(\perp).\]
\end{thm}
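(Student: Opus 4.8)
The plan is to exhibit $\mu F = \colim_{\omega} F^n(\perp)$ as an $F$-algebra and then verify its universal property directly. Write $\ell_n \colon F^n(\perp) \to \mu F$ for the colimit coprojections. The first task is to produce a structure map $\alpha \colon F(\mu F) \to \mu F$. The key observation is that deleting the initial term $\perp$ from the $\omega$-chain does not change its colimit, since the tail $F(\perp) \to F^2(\perp) \to \cdots$ is cofinal; thus $(\mu F, (\ell_{n+1})_n)$ is a colimiting cocone over the shifted chain. On the other hand, applying $F$ to the colimit and invoking the hypothesis that $F$ preserves it shows that $(F(\mu F), (F(\ell_n))_n)$ is likewise a colimiting cocone over the very same shifted chain. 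Comparing the two, I would define $\alpha$ as the unique mediating map determined by $\alpha \circ F(\ell_n) = \ell_{n+1}$; it is automatically an isomorphism because both cocones are colimiting over one diagram.

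Next I would establish initiality. Given an arbitrary $F$-algebra $(A,a)$, I construct a cocone $(h_n \colon F^n(\perp) \to A)$ by recursion: let $h_0$ be the unique map out of the initial object and set $h_{n+1} = a \circ F(h_n)$. A short induction, whose base case is supplied by the uniqueness of maps out of $\perp$, shows that the $h_n$ are compatible with the connecting maps $F^n(\iota)$, so they assemble into a cocone on $A$. The universal property of $\mu F$ then yields a unique $h \colon \mu F \to A$ satisfying $h \circ \ell_n = h_n$ for all $n$.

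It remains to check that $h$ is an $F$-algebra homomorphism and that it is the only one. For the homomorphism condition $h \circ \alpha = a \circ F(h)$, I would precompose both sides with the jointly epimorphic family $F(\ell_n)$ and compute that each side equals $h_{n+1}$, using $\alpha \circ F(\ell_n) = \ell_{n+1}$ on the left and the recursion $a \circ F(h_n) = h_{n+1}$ on the right; agreement on every coprojection forces equality of maps out of the colimit $F(\mu F)$. Uniqueness follows from another induction: any homomorphism $g$ must satisfy $g \circ \ell_n = h_n$, where the inductive step combines $\ell_{n+1} = \alpha \circ F(\ell_n)$ with the homomorphism identity for $g$, so $g = h$ by the uniqueness clause of the universal property.

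The main obstacle I anticipate is the construction of $\alpha$: everything hinges on correctly identifying $F(\mu F)$ with the colimit of the shifted chain, which is precisely where the preservation hypothesis on $F$ is consumed, and on tracking the reindexing so that $\alpha$ emerges as a genuine isomorphism rather than a mere comparison map. Once $\alpha$ and the defining relation $\alpha \circ F(\ell_n) = \ell_{n+1}$ are secured, the remaining verifications are routine inductions and diagram chases driven by the two universal properties.
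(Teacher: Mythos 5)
The paper states this theorem without proof, citing it as a classical result of Ad\'amek, so there is no in-paper argument to compare yours against. Your proposal is the standard and correct proof: the shifted-chain/cofinality argument giving $\alpha$ with $\alpha \circ F(\ell_n) = \ell_{n+1}$ (where the preservation hypothesis is spent), the recursively defined cocone $h_0$ unique, $h_{n+1} = a \circ F(h_n)$, and the two inductions establishing the homomorphism identity and uniqueness via the jointly epimorphic coprojections, are all sound and complete.
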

The dual version of the above theorem is due to M.Barr (see \cite{Barr, AMM}), and it helps us to produce the final $F$-coalgebra under suitable conditions. 
\begin{thm}{\emph{(M.Barr)}} \label{thm:barr}Let $ \mathcal{C}$ be a category with the final object $\top$ and $F$ be an endofunctor on $\mathcal{C}$. Suppose  that the colimit of the final $\omega$-chain, i.e.
\[\ldots  \overset{F^2(\tau)}{\to} F^2(\top) \overset{F(\tau)}{\to} F(\tau) \overset{\tau}{\to} \top \]
exists and $F$ preserves the colimit, then the final $F$-coalgebra is the object
\[ \nu F = \inlim_{\omega} F^n(\top).\]
\end{thm}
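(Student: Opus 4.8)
The plan is to recognize this statement as the exact categorical dual of Theorem~\ref{thm:ada}, and to prove it by transporting that theorem to the opposite category $\mathcal{C}^{op}$. Under the duality, the final object $\top$ of $\mathcal{C}$ becomes the initial object of $\mathcal{C}^{op}$, the endofunctor $F$ induces an endofunctor $F^{op}$ on $\mathcal{C}^{op}$, an $F$-coalgebra $g: Y \to F(Y)$ is precisely an $F^{op}$-algebra, and the final $\omega$-chain displayed in the statement is exactly the initial $\omega$-chain for $F^{op}$. Since a limit in $\mathcal{C}$ is a colimit in $\mathcal{C}^{op}$ and ``$F$ preserves the limit'' reads as ``$F^{op}$ preserves the colimit,'' the hypotheses of Theorem~\ref{thm:ada} hold in $\mathcal{C}^{op}$. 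Applying it there yields the initial $F^{op}$-algebra, which is the final $F$-coalgebra $\nu F = \inlim_{\omega} F^n(\top)$. In principle this finishes the proof; the remaining work is to unwind the dualized construction back in $\mathcal{C}$ so that the coalgebra structure map is made explicit.

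For a self-contained argument I would instead work directly in $\mathcal{C}$. Write $\nu F = \inlim_{\omega} F^n(\top)$ with limit projections $\pi_n: \nu F \to F^n(\top)$, and let $\tau: F(\top) \to \top$ be the unique map. Because $F$ preserves this limit, $F(\nu F)$ is the limit of the shifted chain $\ldots \to F^2(\top) \to F(\top)$, its $n$-th projection $\rho_n: F(\nu F) \to F^{n+1}(\top)$ being $F(\pi_n)$. The key observation is that dropping the terminal object $\top$ from the front of the chain does not change the limit: any cone over the shifted chain extends uniquely over $\top$ since there is a unique arrow into $\top$. Hence the comparison map $F(\nu F) \to \nu F$ is an isomorphism, and I take its inverse as the coalgebra structure $\phi: \nu F \to F(\nu F)$ (this is the dual of Lambek's lemma). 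Tracking the projections gives the bookkeeping identity I will use repeatedly, $F(\pi_n) \circ \phi = \pi_{n+1}$.

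To establish finality, given an arbitrary coalgebra $(Y, g)$ I would build a cone $h_n: Y \to F^n(\top)$ by recursion: let $h_0$ be the unique map to $\top$ and set $h_{n+1} = F(h_n) \circ g$. A short induction using the functoriality of $F$ shows the $h_n$ are compatible with the chain maps $F^n(\tau)$, so they assemble to a unique $h: Y \to \nu F$ with $\pi_n \circ h = h_n$. Verifying that $h$ is a coalgebra morphism, $\phi \circ h = F(h) \circ g$, and that it is the only one, both reduce to postcomposing with the jointly monic projections $\rho_n = F(\pi_n)$ and invoking the recursion $h_{n+1} = F(h_n) \circ g$; conversely any coalgebra morphism $h'$ has projections $\pi_n \circ h'$ forced to satisfy this same recursion, whence $h' = h$.

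The main obstacle is the identification $F(\nu F) \cong \nu F$: it is not merely the preservation hypothesis but also the terminality of $\top$ (which makes the index-shifted chain cofinal with the original) that is needed, and one must keep the two families of projections $\pi_n$ and $\rho_n = F(\pi_n)$ straight throughout. Once the identities $F(\pi_n)\circ \phi = \pi_{n+1}$ are in hand, the finality argument is a routine induction; the only genuinely delicate point is confirming that the recursion defining $h_n$ is exactly what the coalgebra-morphism equation forces after projection, which is what simultaneously yields existence and uniqueness.
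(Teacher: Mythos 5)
Your proposal is correct, and its opening paragraph coincides with what the paper actually does: the paper offers no proof of this theorem at all, stating it as the dual of Theorem~\ref{thm:ada} with a citation to Barr and \cite{AMM}, which is precisely your transport across $\mathcal{C}^{op}$ (coalgebras become algebras, the final $\omega$-chain becomes the initial one, limits become colimits). Your self-contained second argument goes beyond what the paper records, and it is sound: the identification $F(\nu F) \iso \nu F$ via the shifted chain, using terminality of $\top$ to discard the bottom object (the dual of Lambek's lemma), yields the structure map $\phi$ with $F(\pi_n) \circ \phi = \pi_{n+1}$; the recursion $h_{n+1} = F(h_n) \circ g$, checked against the jointly monic limit projections, then delivers existence and uniqueness of the mediating coalgebra morphism in one stroke, exactly as in the standard treatments the paper cites. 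One point worth making explicit: the statement as printed says ``colimit of the final $\omega$-chain'' and ``$F$ preserves the colimit,'' but as your proof makes clear (and as the displayed formula $\nu F = \inlim_{\omega} F^n(\top)$ confirms) these should read \emph{limit} --- likewise the term $F(\tau)$ in the displayed chain should be $F(\top)$ --- and you were right to silently work with the limit throughout; had you taken the stated ``colimit'' literally, the construction of the comparison isomorphism and the cone $h_n$ would both have failed.
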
 
 \begin{rem} \label{rem:cartesian}The metric of the cartesian product of two objects in $\Ult$ is different from the metric of the cartesian product in the category of metric spaces. To be precise the cartesian product in $\Ult$ for two objects $(X, d_X)$ and $(Y,d_Y)$, is the ultra-metric space $(X \times Y, d_{X \times Y})$, where 
\[ d_{X \times Y}((x_1,y_1), (x_2, y_2)) = max \lbrace d_X(x_1, x_2), d_Y(y_1, y_2)\rbrace.\]
Thus one can check that in the infinite cartesian product 
\[ V =  \ldots \times \frac{1}{p^n}V_p\times\ldots \times \frac{1}{p^2}V_p \times \frac{1}{p}V_p \times V_p\]
the distance function is given by the formula 
\[ d_V((\ldots, a_2, a_1, a_0), (\ldots, b_2, b_1, b_0)) = p^{-i}\]
where $i$ is the smallest number such that $a_i \neq b_i.$ Thus $V \iso Z_p$ is an ultra-metric space. 
\end{rem}
\begin{proof}[Proof of Main Theorem \ref{main:final}]
Note that the category $\Ult$ has a final object $\top$, the one point set. First we notice that by forgetting the metric structure, i.e. $\mathcal{F}_p$ as an endofunctor on sets, is the functor that sends $X \mapsto X \times V_p$. By using  Theorem~\ref{thm:barr} or otherwise one can conclude that the final $\mathcal{F}_p$-coalgebra on sets is the infinite cartesian product of $V_p$, which is isomorphic to the underlying set of $Z_p$ (see Remark~\ref{rem:cartesian} for further clarification). We will show that $Z_p$ is indeed the final $\mathcal{F}_p$-coalgebra in $\Ult$. 

So assume that $(X, f)$ is an $\mathcal{F}_p$-coalgebra in $\Ult$ where $f$ is a nonexpanding map. Since the underlying set for $Z_p$ is final, there exists a unique set map 
\[ g: X \to Z_p\]
with the diagram
 \begin{equation}\label{eqn:comp}\xymatrix{
X \ar[d]_g\ar[r] &  \frac{1}{p} X \ar[d]^{\mathcal{F}_p(g)} \times V_p\\
Z_p \ar[r]_{\phi} & \frac{1}{p} X \times V_p
} \end{equation}
commuting. Thus one can check that $g(x) = (\ldots , a_2, a_1, a_0)$ if the composite 
\begin{equation} X \overset{f}{\to} \frac{1}{p} X \times V_p \overset{\mathcal{F}_p(f)}{\to} \ldots  \overset{\mathcal{F}_p^{\circ n}(f)}{\to}\frac{1}{p^{n+1}} X \times \frac{1}{p^{n}} V_p \times \ldots \times V_p  \end{equation}
sends $x \mapsto (x', (a_n, \ldots, a_0))$. 
\begin{clm} The map $g$ as a map of metric spaces is nonexpanding i.e.  
\[ d_{Z_p}(g(x_1), g(x_2)) \leq d_{X}(x_1,x_2),\]
 hence a morphism in $\Ult$. 
\end{clm}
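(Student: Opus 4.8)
The plan is to build on the fact that $g$ is assembled from the iterated composites of $f$, each of which is a morphism of $\Ult$ and hence nonexpanding. Write $f_n$ for the $n$-fold composite
$$X \overset{f}{\to} \frac{1}{p} X \times V_p \overset{\mathcal{F}_p(f)}{\to} \cdots \overset{\mathcal{F}_p^{\circ n}(f)}{\to} \frac{1}{p^{n+1}}X \times \frac{1}{p^n}V_p \times \cdots \times V_p,$$
so that $f_n(x) = (x', (a_n, \ldots, a_0))$ reads off the first $n+1$ digits of $g(x)$. Since $f$ is nonexpanding and $\mathcal{F}_p$ carries nonexpanding maps to nonexpanding maps (being an endofunctor of $\Ult$), each map occurring in the composite is nonexpanding, and therefore so is $f_n$. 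One should also note that, because applying $\mathcal{F}_p$ only appends one new $V_p$-digit at each stage, the digits $a_0, \ldots, a_n$ returned by $f_n$ do not depend on $n$; they really are the coordinates of $g(x)$.

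Next I would use that the codomain of $f_n$ is a product in $\Ult$, whose metric, by Remark~\ref{rem:cartesian}, is the maximum of the appropriately scaled factor metrics. In particular, projection onto the factor $\frac{1}{p^i}V_p$ is nonexpanding, so if we write $a_i^{(1)}, a_i^{(2)}$ for the $i$-th digits of $g(x_1), g(x_2)$, then for every $n \geq i$
$$d_{\frac{1}{p^i}V_p}\big(a_i^{(1)}, a_i^{(2)}\big) \;\leq\; d\big(f_n(x_1), f_n(x_2)\big) \;\leq\; d_X(x_1, x_2),$$
where the second inequality is just the nonexpansiveness of $f_n$.

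To finish, suppose $g(x_1) \neq g(x_2)$ and let $i$ be the least index at which their digit streams differ; by Remark~\ref{rem:cartesian}, $d_{Z_p}(g(x_1), g(x_2)) = p^{-i}$. Taking any $n \geq i$, the digits $a_i^{(1)}$ and $a_i^{(2)}$ disagree, and since $V_p$ carries the discrete metric, the left-hand distance in the displayed chain equals $p^{-i}$. Combining the inequalities yields $d_{Z_p}(g(x_1), g(x_2)) = p^{-i} \leq d_X(x_1, x_2)$, which is the claim; the case $g(x_1) = g(x_2)$ is trivial. I expect the only delicate point to be the bookkeeping in the first paragraph --- checking that the digit $a_i$ produced by $f_n$ is stable in $n$ and agrees with the $i$-th coordinate of $g$ --- whereas the metric estimate itself is an immediate consequence of the max-formula for the $\Ult$-product and the discreteness of $V_p$.
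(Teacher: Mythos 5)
Your proof is correct and takes essentially the same approach as the paper: both extract the digits of $g$ from the nonexpanding iterated composites of $f$ and then use nonexpansiveness of projections in the max-metric product of Remark~\ref{rem:cartesian}. The only cosmetic difference is that you project onto a single factor $\frac{1}{p^i}V_p$ and conclude via discreteness of $V_p$ at the first differing digit, whereas the paper projects onto the whole truncation $\tau_{\leq n}(Z_p)$ and invokes the identity $d_{Z_p}(g(x_1),g(x_2)) = \max\lbrace d_{\tau_{\leq n}(Z_p)}(\pi_n(g(x_1)),\pi_n(g(x_2))) : n \in \mathbb{N}\rbrace$; the two finishing steps are interchangeable.
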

Notice that the map $f$ and $\mathcal{F}_p^{\circ n}(f)$ are all nonexpanding maps. Thus the composite of the maps in the Diagram~\ref{eqn:comp}, further composed with the projection map onto $ \frac{1}{p^{n}} V_p \times \ldots\times \frac{1}{p}V_p \times V_p$, is nonexpanding. This composite can also be regarded as $\pi_n \circ g$ where 
\[ \pi_n: Z_p \to  \tau_{\leq n}(Z_p)= \frac{1}{p^{n}} V_p \times \ldots\times \frac{1}{p}V_p \times V_p\]
is the projection of $Z_p$ onto its first $n+1$-coordinates. Thus $\pi_n \circ g$ is $1$-bounded, i.e.
\[d_{\tau_{\leq n}(Z_p)}(\pi_n(g(x_1)),\pi_n(g(x_2))) \leq d_{X}(x_1,x_2). \]  For the map $g: X \to Z_p$ we have the equality
\[ d_{Z_p}(g(x_1), g(x_2)) = max \lbrace d_{\tau_{\leq n}(Z_p)}(\pi_n(g(x_1)),\pi_n(g(x_2))): n \in \mathbb{N} \rbrace.\]
Thus we get $d_{Z_p}(g(x_1), g(x_2)) \leq d_{X}(x_1,x_2)$, i.e. $g$ is a nonexpanding morphism. Moreover, $g$ is unique in $\Ult$, as it is unique as a set  map. Hence $Z_p$ is the final $\mathcal{F}_p$-coalgebra in $\Ult$.
 \end{proof}
\begin{rem} Having established $Z_p$ as the final $\mathcal{F}_p$-coalgebra reader may be curious to know the initial $\mathcal{F}_p$-algebra. The category $\Ult$ does not have any initial $\mathcal{F}_p$-algebra. However, if we consider $\mathcal{F}_p$ as an endofunctor of $\Ult_*$, which has an initial object $\perp$, one can use Theorem~\ref{thm:ada} to see that the initial $\mathcal{F}_p$-algebra is the colimit of the diagram
\[ \perp \to \perp \times V_p \to \perp \times \frac{1}{p}V_p \times V_p \to \ldots \to \perp \times \frac{1}{p^n}V_p \times \ldots \frac{1}{p}V_p \times V_p \to \ldots  \]
which is the set of finite streams in $V_p$. The argument is similar to the proof of Theorem~\ref{main:final}, hence left to the reader. In fact this set can be identified with the set of natural numbers $\mathbb{N}$ with the $p$-adic metric under the map $(a_n, \ldots, a_0) \mapsto \sum_{i=0}^n a_ip^i$. If we work in $\cUlt_*$, then we see that the initial $\mathcal{F}_p$-algebra is the Cauchy completion of $\mathbb{N}$ with the $p$-adic metric, which is precisely $Z_p$.  
\end{rem}

\section{The Addition and the Multiplication in $\pAdic$} \label{sec:addmult}
The addition and the multiplication in $\pAdic$ have the notion of `carry over', hence are different from the addition and multiplication of power series. It is convenient to establish some notation before we give explicit formula for the $p$-adic addition and multiplication. 
\begin{notn} Fix a prime $p$. For any integer $n$, define $[n]_p$ to be the number in $\lbrace 0, \ldots, p-1\rbrace$, which is in the congruence class of  $n$ modulo $p$. Define $k_p(n)$ to be the unique number such that we have the formula 
\[ n = k_p(n)p + [n]_p.\]
\end{notn}
\begin{defn} The $p$-adic addition is a map $\alpha: \pAdic \times \pAdic \to \pAdic$, such that $$\alpha(a, b) = (\ldots , [\alpha_2(a,b)]_p,[\alpha_1(a,b)]_p, [\alpha_0(a,b)]_p),$$ where 
\[ \alpha_i: \pAdic \times \pAdic \to \mathbb{N}\]
is given by the inductive formula 
\begin{enumerate}
\item $\alpha_0(a,b) = a_0 + b_0$ and 
\item $\alpha_i(a,b) = a_i + b_i + k_p(\alpha_{i-1}(a,b))$.
\end{enumerate}
\end{defn}
\begin{defn} The $p$-adic multiplication is a map $\mu: \pAdic \times \pAdic \to \pAdic$, such that $$\mu(a, b) = (\ldots , [\mu_2(a,b)]_p,[\mu_1(a,b)]_p, [\mu_0(a,b)]_p),$$ where 
\[ \mu_i: \pAdic \times \pAdic \to \mathbb{N}\]
is given by the inductive formula 
\begin{enumerate}
\item $\mu_0(a,b) = a_0b_0$ and 
\item $\mu_i(a,b) = \sum_{i =0}^{n}a_ib_{n-i} + k_p(\mu_{i-1}(a,b))$.
\end{enumerate}
\end{defn}
The key property of the addition and the multiplication of $\pAdic$ is that when restricted to the image of $\mathbb{N}$, they are the usual addition and the multiplication of $\mathbb{N}$ respectively. Since $\mathbb{N}$ is dense in $\pAdic$ it is enough to check that the addition and the multiplication on $\mathbb{N}$ are nonexpanding with the $p$-adic metric on $\mathbb{N}$, to conclude that $\alpha$ and $\mu$ are nonexpanding maps. Recall that the $p$-adic metric on $\mathbb{N}$ is given by 
\[ d_p(m,n) = p^{-v(m-n)}\]
where $v(k)$ is the valuation, i.e. the maximum power of $p$ which divides $k$. Since the valuation function satisfies 
\[ v(m+n) \geq max \lbrace v(m), v(n)\rbrace\]
and 
\[ v(mn) \geq max \lbrace v(m), v(n)\rbrace,\]
the addition and the multiplication maps on $\mathbb{N}$ are nonexpanding.

For convenience, let's denote the natural number with the $p$-adic metric by $N_p$. Notice that the coalgebra map on $Z_p$ restricted to $N_p$ gives a coalgebra map on $N_p$ 
\[ \phi_{N_p}: N_p \to \frac{1}{p}N_p \times V_p\]
where $n \mapsto ([n]_p, k_p(n))$. In order to get the addition and multiplication on $\pAdic$ as $\mathcal{F}_p$-coalgebra maps, we define two different coalgebra structures on $N_p$, i.e. maps  
\[A, M: N_p \times N_p \to \frac{1}{p}(N_p \times N_p) \times V_p  \]
such that the addition on natural numbers is the $\mathcal{F}_p$-coalgebra map from $(N_p \times N_p, a) \to (N_p, \phi_{N_p})$ and the multiplication on natural numbers is the $\mathcal{F}_p$-coalgebra map from $(N_p \times N_p,A) \to (N_p, \phi_{N_p})$. Let $\tilde{A}$ and $\tilde{M}$ be the Cauchy completion of $A$ and $M$ respectively. It is not hard to see from the properties of $A$ and $M$ that the final maps $(Z_p \times Z_p, \tilde{A}) \to (Z_p, \phi)$ and $(Z_p \times Z_p, \tilde{M}) \to (Z_p, \phi)$ are the addition map $\alpha$ and the multiplication map $\mu$, respectively. 
\subsection{The addition in $\pAdic$}
Let $A: N_p \times N_p \to \frac{1}{p}(N_p \times N_p) \times V_p$ be the map given by the formula 
\[ A(m,n)= (A_2(m,n), A_1(m,n), A_0(m,n)) = (k_p(m) + k_p([m]_p + [n]_p) ,k_p(n),[m+n]_p).\] 
\begin{prop} The map $A$ is a nonexpanding map.
\end{prop}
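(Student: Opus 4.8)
The plan is to verify the nonexpanding inequality by unwinding both metrics and reducing everything to a single statement about the map $k_p$. Fix two points $(m_1,n_1),(m_2,n_2)\in N_p\times N_p$ and write $d=\max\{d_p(m_1,m_2),d_p(n_1,n_2)\}$ for their distance in the domain. By Remark~\ref{rem:cartesian} the codomain $\frac{1}{p}(N_p\times N_p)\times V_p$ carries the max metric, so the distance between the images $A(m_1,n_1)$ and $A(m_2,n_2)$ is
\[ \max\left\{ \frac{1}{p}\max\{ d_p(A_2(m_1,n_1),A_2(m_2,n_2)),\, d_p(A_1(m_1,n_1),A_1(m_2,n_2)) \},\; d_{V_p}(A_0(m_1,n_1),A_0(m_2,n_2)) \right\}, \]
and it is exactly this quantity that I must bound above by $d$.

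The key ingredient is the behaviour of $k_p$ under the $p$-adic metric. From the defining identity $m=k_p(m)p+[m]_p$ one obtains
\[ m_1-m_2=\big(k_p(m_1)-k_p(m_2)\big)p+\big([m_1]_p-[m_2]_p\big), \]
and reading off valuations gives $d_p(k_p(m_1),k_p(m_2))\le p\,d_p(m_1,m_2)$, with equality whenever $m_1\equiv m_2\pmod p$; in that case the last summand vanishes and $v(m_1-m_2)=v(k_p(m_1)-k_p(m_2))+1$. This is precisely the assertion that $k_p$ becomes nonexpanding after contraction by $\frac{1}{p}$, and it drives the whole computation.

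With this in hand I would split on whether $d=1$. If $d=1$ there is nothing to prove: every object of $\Ult$ is $1$-bounded and contracting by $\frac{1}{p}$ only shrinks distances, so the displayed codomain distance is automatically at most $1=d$. If $d<1$ then $d_p(m_1,m_2)<1$ and $d_p(n_1,n_2)<1$, i.e.\ $m_1\equiv m_2$ and $n_1\equiv n_2\pmod p$. Hence $[m_1]_p=[m_2]_p$ and $[n_1]_p=[n_2]_p$, so the emitted digit $A_0=[m+n]_p$ agrees on the two inputs (making $d_{V_p}(A_0(m_1,n_1),A_0(m_2,n_2))=0$) and the carry $k_p([m]_p+[n]_p)$ occurring in $A_2$ is identical on the two inputs. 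Therefore $A_2$ differs only through $k_p(m_1)-k_p(m_2)$ and $A_1=k_p(n)$ only through $k_p(n_1)-k_p(n_2)$, and the equality case of the lemma gives
\[ \frac{1}{p}\max\{d_p(k_p(m_1),k_p(m_2)),\, d_p(k_p(n_1),k_p(n_2))\}=\max\{d_p(m_1,m_2),\, d_p(n_1,n_2)\}=d, \]
so the codomain distance is exactly $d$ and $A$ is nonexpanding.

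I expect the only delicate point to be the first coordinate $A_2=k_p(m)+k_p([m]_p+[n]_p)$, because of the extra carry term: a priori its difference could have valuation $0$ and so contribute distance $1$. The observation that saves the argument is that this term depends only on the residues $[m]_p,[n]_p$, hence is constant exactly in the regime $d<1$ where a nontrivial bound is required, while in the regime $d=1$ the $1$-boundedness of $\Ult$ absorbs any contribution. Everything else is the valuation bookkeeping packaged in the lemma.
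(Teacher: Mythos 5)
Your proof is correct and takes essentially the same route as the paper's: the same case split on whether the distance is $1$ or at most $p^{-1}$ (equivalently, whether the inputs are congruent modulo $p$), the same observation that $A_0$ and the carry term $k_p([m]_p+[n]_p)$ depend only on residues, and the same identity $m_1-m_2=(k_p(m_1)-k_p(m_2))p$, which the paper states as the divisibility $p^{i-1}\mid k_p(m_1)-k_p(m_2)$ and you repackage as the metric scaling property of $k_p$. Your added remark that equality is achieved, so that $A$ is in fact an isometry onto its image in the regime $d<1$, is a harmless strengthening beyond what the proposition requires.
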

 \begin{proof} If $d_p((m_1,n_1),(m_2,n_2)) =p^0 = 1$ then there is nothing to check as all the metric spaces are $1$-bounded. If $d_p((m_1,n_1),(m_2,n_2)) =p^{-i}$, where $i\geq 1$, we need to show that 
\begin{enumerate}
\item $d_{V_p}(A_0(m_1,n_1), A_0(m_2,n_2)) \leq p^{-i}$ and 
\item for $k \in \lbrace0,1,2 \rbrace$, $d_{N_p}(A_k(m_1,n_1), A_k(m_2,n_2)) \leq p^{-i+1}$ in $N_p$ as we are contracting the image of these maps by a factor of $\frac{1}{p}$. 
\end{enumerate}
Observe that $p^i\mid m_1 -m_2$ and $p^i \mid n_1-n_2$. In particular, we see that  
\begin{eqnarray*}
A_0(m_1,n_1) -A_0(m_2, n_2) &=& [m_1 + n_1]_p - [m_1 + n_2]_p \\
&=& [m_1 + n_1 - m_2 + n_2]_p \\
& =& 0
\end{eqnarray*}
Thus $d_{V_p}(A_0(m_1,n_1), A_0(m_2,n_2))=0$. Next we will show that 
\[ p^{i-1} \mid A_k(m_1,n_1) - A_k(m_2, n_2)\]
for $k \in \lbrace1,2\rbrace$. Notice that if $p^{i} \mid m_1-m_2$, then $[m_1]_p = [m_2]_p$. As a result 
\[ m_1 - m_2 = (k_p(m_1) -k_p(m_2))p\]
which means that $p^{i-1} \mid (k_p(m_1) -k_p(m_2))$. Similar arguments show that $p^{i-1} \mid (k_p(n_1) -k_p(n_2))$, thus the map $A_2$ satisfies the desired condition. Using the fact that $[m_1]_p = [m_2]_p$ and $[n_1]_p = [n_2]_p$, we get $[m_1]_p + [n_1]_p = [m_2]_p + [n_2]_p $, hence $$k_p([m_1]_p + [n_1]_p) = k_p([m_2]_p + [n_2]_p)$$. This concludes the result for the function $A_1$.  
\end{proof}
Before we prove the main result, we need to prove a short technical result for the function $k_p$.  
\begin{lem} \label{lem:tech} The following formula holds: 
\[ k_p(mp + n) = m + k_p(n)\]
\end{lem}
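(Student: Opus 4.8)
The plan is to exploit the defining equation $n = k_p(n)\,p + [n]_p$ together with the uniqueness of that representation, which is exactly the uniqueness asserted when $k_p$ was introduced. The strategy is to write $mp+n$ in the canonical form $(\text{something})\cdot p + (\text{remainder in } \{0,\ldots,p-1\})$ and then read off $k_p(mp+n)$ by matching against this unique decomposition.

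First I would establish the compatibility of the bracket operation with the added multiple of $p$, namely that $[mp+n]_p = [n]_p$. This is immediate from the definition of $[\,\cdot\,]_p$: since $mp \equiv 0 \pmod p$, the integer $mp+n$ lies in the same congruence class modulo $p$ as $n$, and there is a single representative of that class in $\{0,\ldots,p-1\}$, so the two brackets coincide.

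Next I would substitute the defining identity $n = k_p(n)\,p + [n]_p$ into $mp+n$ and rearrange, obtaining
\[
mp + n = mp + k_p(n)\,p + [n]_p = \bigl(m + k_p(n)\bigr)p + [mp+n]_p,
\]
where the last equality uses the bracket identity from the previous step. This exhibits $mp+n$ in precisely the form $(\,\cdot\,)p + [mp+n]_p$ that characterizes $k_p(mp+n)$.

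Finally I would invoke the uniqueness of the decomposition defining $k_p$: since $m + k_p(n)$ plays the role of the quotient in the canonical expression for $mp+n$, it must equal $k_p(mp+n)$, giving the claimed formula. I do not anticipate any genuine obstacle here; the only point requiring a moment's care is the bracket identity $[mp+n]_p=[n]_p$, and even that follows directly from the definition of the congruence-class representative.
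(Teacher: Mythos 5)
Your proof is correct: the bracket identity $[mp+n]_p=[n]_p$, the rewriting $mp+n=(m+k_p(n))p+[n]_p$, and the appeal to the uniqueness of the quotient in the decomposition defining $k_p$ are exactly the intended verification. The paper itself leaves this lemma's proof to the reader, and what you wrote is the standard argument it has in mind.
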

Proof is left to the reader to verify. 
\begin{proof}[Proof of Main~Theorem~\ref{main:add}] We need to verify that the diagram 
\begin{equation} \label{diag:add}
\xymatrix{
N_p \times N_p \ar[d]_{+}\ar[r]^{A} & \frac{1}{p}(N_p \times N_p) \times V_p \ar[d]^{\mathcal{F}_p(+)}\\
N_p \ar[r]_{\phi_{N_p}} & \frac{1}{p}N_p \times V_p
}
\end{equation}
commutes. 
So we need to show that 
\[(k_p(n+m), [m+n]_p) = (A_1(m,n)+ A_2(m,n),[m]_p + [n]_p).\]
Clearly, 
\begin{eqnarray*}
[m+n]_p &=& [m]_p + [n]_p.
\end{eqnarray*}
Further,
\begin{eqnarray*}
k_p(m+n) &=& k_p(k_p(m)p +[m]_p + k_p(n)p + [n]_p)\\
&=& k_p((k_p(m) + k_p(n))p + [m]_p + [n]_p) \\
&=& k_p(m) + k_p(n) + k_p([m]_p + [n]_p) \hspace{10pt} \text{by Lemma~\ref{lem:tech}}\\
&=& A_1(m,n)+ A_2(m,n)
\end{eqnarray*}
Now applying the Cauchy completion functor to the Diagram~\ref{diag:add} we get, 
\begin{equation}
\xymatrix{
Z_p \times Z_p \ar[d]_{\alpha}\ar[r]^{\tilde{A}} & \frac{1}{p}(Z_p \times Z_p) \times V_p \ar[d]^{\mathcal{F}_p(\alpha)}\\
Z_p \ar[r]_{\phi} & \frac{1}{p}Z_p \times V_p
}
\end{equation}
 \end{proof}
\begin{rem} Someone who is curious about the formula for $\tilde{A}$ may check that 
\[ \tilde{A}(a,b) = (\alpha(T(a), r) ,T(b),[a_0+b_0]_p).\] 
Here $T$ is the \emph{tail} function, i.e. $T((\ldots , a_2,a_1,a_0)) = (\ldots, a_2, a_1)$, $\alpha$ is the $p$-adic addition and $r= (\ldots, 0, k_p(a_0 + b_0))$.
\end{rem}
\subsection{The multiplication in $\pAdic$}
Let $M: N_p \times N_p \to \frac{1}{p}(N_p \times N_p) \times V_p$ be the map such that $M(m,n) = (M_2(m,n), M_1(m,n), M_0(a,b))$ where 
\begin{itemize}
\item $M_0(m,n) = [mn]_p$, 
\item $M_1(m,n) = \left\lbrace \begin{array}{cccc} 
m & \text{if } [n]_p = 0 \\
k_p(mn) & \text{ if } [n]_p \neq 0
 \end{array} \right.$
\item $M_1(m,n) = \left\lbrace \begin{array}{cccc} 
k_p(n) & \text{if } [n]_p = 0 \\
1 & \text{ if } [n]_p \neq 0
 \end{array} \right.$
\end{itemize} 
We must check: 
\begin{prop} The map $M$ is nonexpanding. 
\end{prop}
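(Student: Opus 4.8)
The plan is to verify nonexpansion of $M$ by mirroring the proof just given for $A$: I reduce to the case $d_p((m_1,n_1),(m_2,n_2)) = p^{-i}$ with $i \geq 1$, which gives me $p^i \mid m_1 - m_2$ and $p^i \mid n_1 - n_2$, and in particular $[m_1]_p = [m_2]_p$ and $[n_1]_p = [n_2]_p$. I must then establish two things: that $d_{V_p}(M_0(m_1,n_1), M_0(m_2,n_2)) \leq p^{-i}$, and that for $k \in \{1,2\}$ we have $p^{i-1} \mid M_k(m_1,n_1) - M_k(m_2,n_2)$, since the target coordinates $M_1, M_2$ live in the contracted factor $\frac{1}{p}(N_p \times N_p)$.

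First I would handle $M_0$. Since $p^i \mid m_1 - m_2$ and $p^i \mid n_1 - n_2$ with $i \geq 1$, one checks $p \mid m_1 n_1 - m_2 n_2$ (writing $m_1 n_1 - m_2 n_2 = m_1(n_1 - n_2) + n_2(m_1 - m_2)$), so $[m_1 n_1]_p = [m_2 n_2]_p$ and thus $d_{V_p}(M_0(m_1,n_1), M_0(m_2,n_2)) = 0 \leq p^{-i}$. Next I would treat $M_2$, whose value depends on whether $[n]_p = 0$. Because $[n_1]_p = [n_2]_p$, the two inputs fall into the \emph{same} branch of the case split, so no mismatch between branches can occur. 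In the branch $[n]_p \neq 0$ the value is the constant $1$, so the difference is $0$; in the branch $[n]_p = 0$ the value is $k_p(n)$, and the argument from the $A$-proof shows $p^{i-1} \mid k_p(n_1) - k_p(n_2)$. Hence $M_2$ satisfies the required bound in either branch.

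The main obstacle is $M_1$, specifically the branch $[n]_p \neq 0$ where $M_1(m,n) = k_p(mn)$. Here I need $p^{i-1} \mid k_p(m_1 n_1) - k_p(m_2 n_2)$. The clean way is to first show $p^i \mid m_1 n_1 - m_2 n_2$: indeed $m_1 n_1 - m_2 n_2 = m_1(n_1 - n_2) + n_2(m_1 - m_2)$ and each summand is divisible by $p^i$. Combined with $[m_1 n_1]_p = [m_2 n_2]_p$ from the $M_0$ step, the relation $mn = k_p(mn)p + [mn]_p$ gives $m_1 n_1 - m_2 n_2 = (k_p(m_1 n_1) - k_p(m_2 n_2))p$, and dividing the divisibility $p^i \mid m_1 n_1 - m_2 n_2$ by $p$ yields $p^{i-1} \mid k_p(m_1 n_1) - k_p(m_2 n_2)$, exactly as needed. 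In the other branch $[n]_p = 0$ one has $M_1(m,n) = m$, and $p^i \mid m_1 - m_2$ immediately gives $p^{i-1} \mid m_1 - m_2$, which suffices. Assembling these coordinatewise bounds and taking the maximum, as in Remark~\ref{rem:cartesian}, shows $M$ is nonexpanding.

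I would flag one point of care: the definition as typeset lists $M_1$ twice and never names $M_2$, so before writing the proof I would read the second displayed case-split as the intended definition of $M_2$ and argue accordingly; the logical content above is unaffected by this relabeling.
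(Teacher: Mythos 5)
Your proof is correct and takes essentially the same route as the paper's: the same case split on whether $[n]_p = 0$, the same identity $k_p(m_1n_1) - k_p(m_2n_2) = (m_1n_1 - m_2n_2)/p$ in the branch $[n]_p \neq 0$, and the same $k_p$-divisibility argument recycled from the proof for $A$; your observation that $[n_1]_p = [n_2]_p$ forces both inputs into the same branch is just a cleaner way of dispatching the paper's Case 2, which the paper handles by noting the distance is then $1$. Your reading of the typeset definition (the second case split is $M_2$) matches the paper's intent, and your explicit check of $M_0$ fills in what the paper leaves to the reader.
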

\begin{proof} In order to show $M$ is nonexpanding, we must show that $M_0$, $M_1$ and $M_2$ are nonexpanding. It is easy to verify that $M_0$ is nonexpanding, hence left to the reader. In order to check that $M_1$ and $M_2$ are nonexpanding, we must verify that  whenever $p^i\mid m_1 -n_1$ and $p^i\mid m_2 -n_2$ where $i \geq 1$, $p^{i-1} \mid (M_k(m_1,n_1) - M_k(m_2,n_2))$ for $k \in \lbrace 1,2\rbrace$. \vspace{3pt}  \\
\textbf{Case 1.} When $[n_1]_p \neq 0$ and $[n_2]_p \neq 0$. \vspace{3pt}\\ 
In this case $M_2(m_1,n_1) - M_2(m_2, n_2) = 0$, hence satisfies the required condition. Since $p^i \mid m_1 -n_1$ and $p^i \mid m_2 -n_2$, we may write $m_1 = a_1p^i + n_1$ and $m_2= a_2p^i + n_2$. Thus, 
\[ m_1m_2 -n_1n_2  = p^{2i}a_1a_2 + p^i(a_1m_2 + a_2m_1).\] 
Thus $[m_1n_1]_p = [m_2n_2]_p$. Moreover 
\begin{eqnarray*}
M_1(m_1,n_1) -M_1(m_2,n_2) &=& k_p(m_1n_1) - k_p(m_2n_2)\\
 &=& \frac{m_1n_1 -[m_1n_1]_p}{p} - \frac{m_2n_2 -[m_2n_2]_p}{p} \\ 
 &=& \frac{m_1n_1 - m_2n_2}{p}
\end{eqnarray*}
which is divisible by $p^{i-1}$. 
\vspace{3pt}  \\
\textbf{Case 2.} Either $[n_1]_p \neq 0$ and $[n_2]_p = 0$  or $[n_1]_p = 0$ and $[n_2]_p \neq 0$ \vspace{3pt}\\ 
In this case $p$ divides one of $n_1$ and $n_2$, thus $n_1-n_2$ is not divisible by $p$ and  
\[d_{N_p \times N_p}((m_1,n_1), (m_2,n_2)) = 1.\]
Since we are working with $1$-bounded metric spaces, the functions $M_1$ and $M_2$ are trivially nonexpanding.\vspace{3pt}  \\
\textbf{Case 3.} When $[n_1]_p = 0$ and $[n_2]_p = 0$. \vspace{3pt}\\  
In this case $M_1$ is a projection on the first factor which is divisible by $p^i$ and $M_2= k_p(n)$ which is divisible by $p^{i-1}$.  
\end{proof}
\begin{proof}[Proof of Main~Theorem~\ref{main:mult}]
The diagram 
\begin{equation} \label{diag:mult}
\xymatrix{
N_p \times N_p \ar[d]_{\times}\ar[r]^{M} & \frac{1}{p}(N_p \times N_p) \times V_p \ar[d]^{\mathcal{F}_p(\times)}\\
N_p \ar[r]_{\phi_{N_p}} & \frac{1}{p}N_p \times V_p
}
\end{equation}
commutes as it can be readily checked that  
\[ (k_p(mn), [mn]_p) = (M_1(m,n)M_2(m,n), [mn]_p).\] 
Next we apply the Cauchy completion functor to the Diagram~\ref{diag:mult} to obtain the commutative diagram 
\begin{equation}
\xymatrix{
Z_p \times Z_p \ar[d]_{\mu}\ar[r]^{\tilde{M}} & \frac{1}{p}(Z_p \times Z_p) \times V_p \ar[d]^{\mathcal{F}_p(\mu)}\\
Z_p \ar[r]_{\phi} & \frac{1}{p}Z_p \times V_p
} 
\end{equation}
as desired.
\end{proof}
\begin{rem} A curious reader may verify that when the elements of $Z_p$ are represented in terms of infinite streams, we get the following formula for $\tilde{M}.$ 
\begin{itemize}
\item $\tilde{M}_0(a, b) = a_0b_0$
\item $\tilde{M}_1(a,b) = \left\lbrace \begin{array}{ccc}a & \text{ if } b_0 =0\\ T(\mu(ab))& \text{otherwise} \end{array}\right.$
\item $\tilde{M}_2(a,b) = \left\lbrace \begin{array}{ccc} T(b) & \text{if }b_0 =0 \\ 1 & \text{otherwise,}\end{array}\right.$
\end{itemize}  
where $T$ is the \emph{tail} function, i.e. $T((\ldots , a_2,a_1,a_0)) = (\ldots, a_2, a_1)$. 
\end{rem}

\end{document}